\definecolor{darkblue}{rgb}{.2, 0.2,.8}
\definecolor{darkgreen}{rgb}{0,0.5,0.3}
\definecolor{darkred}{rgb}{.8, .1,.1}
\newcommand{\Pro}{\mathbb{P}}
\newcommand{\blue}{}
\newcommand{\ex}{{\rm e}\,}
\newtheorem{lemma}{Lemma}[section]
\newtheorem{theorem}[lemma]{Theorem}
\newtheorem{proposition}[lemma]{Proposition}
\newtheorem{definition}[lemma]{Definition}
\newtheorem{corollary}[lemma]{Corollary}
\newtheorem{example}[lemma]{Example}
\newtheorem{exercise}[lemma]{Exercise}
\newtheorem{remark}[lemma]{Remark}
\newtheorem{fig}[lemma]{Figure}
\newtheorem{tab}[lemma]{Table}
\newcommand{\bth}{\begin{theorem}}
\newcommand{\ethe}{\end{theorem}}
\newcommand{\bre}{\begin{remark}\em }
\newcommand{\ere}{\end{remark}}
\newcommand{\ble}{\begin{lemma}}
\newcommand{\ele}{\end{lemma}}
\newcommand{\pp}{point process}
\newcommand{\bde}{\begin{definition}}
\newcommand{\ede}{\end{definition}}
\newcommand{\bco}{\begin{corollary}}
\newcommand{\eco}{\end{corollary}}
\newcommand{\bpr}{\begin{proposition}}
\newcommand{\epr}{\end{proposition}}
\newcommand{\bexer}{\begin{exercise}}
\newcommand{\eexer}{\end{exercise}}
\newcommand{\bexam}{\begin{example}}
\newcommand{\eexam}{\end{example}}
\newcommand{\bfi}{\begin{fig}}
\newcommand{\efi}{\end{fig}}
\newcommand{\btab}{\begin{tab}}
\newcommand{\etab}{\end{tab}}
\newcommand{\fidi}{finite-dimensional distribution}
\newcommand{\rv}{random variable}
\newcommand{\rhs}{right-hand side}
\newcommand{\df}{distribution function}
\newcommand{\beao}{\begin{eqnarray*}}
\newcommand{\eeao}{\end{eqnarray*}\noindent}
\newcommand{\beam}{\begin{eqnarray}}
\newcommand{\eeam}{\end{eqnarray}\noindent}
\newcommand{\beqq}{\begin{equation}}
\newcommand{\eeqq}{\end{equation}\noindent}
\newcommand{\bce}{\begin{center}}
\newcommand{\ece}{\end{center}}
\newcommand{\barr}{\begin{array}}
\newcommand{\earr}{\end{array}}
\newcommand{\eqd}{\stackrel{d}{=}}
\newcommand{\vague}{\stackrel{\lower0.2ex\hbox{$\scriptscriptstyle
                    \it{v} $}}{\rightarrow}}
\newcommand{\weak}{\stackrel{\lower0.2ex\hbox{$\scriptscriptstyle
                    \it{w} $}}{\rightarrow}}
\newcommand{\what}{\stackrel{\lower0.2ex\hbox{$\scriptscriptstyle
                    \it{\hat{w}} $}}{\rightarrow}}
\newcommand{\bdis}{\begin{displaymath}}
\newcommand{\edis}{\end{displaymath}\noindent}
\newcommand{\nto}{n\to\infty}
\newcommand{\bbr}{{\mathbb R}}
\newcommand{\bbz}{{\mathbb Z}}
\newcommand{\BM}{Brownian motion}
\newcommand{\evd}{extreme value distribution}
\newcommand{\wrt}{with respect to}
\newcommand{\fct}{function}
\newcommand{\ds}{distribution}
\newcommand{\rep}{representation}
\newcommand{\seq}{sequence}
\newcommand{\pro}{probabilit}
\newcommand{\ms}{measure}
\newcommand{\R}{\mathbb{R}}
\newcommand{\E}{\mathbb{E}}
\newcommand{\Q}{\mathbb{Q}}
\newcommand{\Expec}{\mathbb{E}}
\newcommand{\Var}{\mathbb{V}\mathrm{ar}}
\newcommand{\Cov}{\mathbb{C}\mathrm{ov}}
\begin{document}

\title{Exact simulation of Brown-Resnick random fields {\blue at a finite number of locations}}
\author{A.B.~Dieker}
\address{H. Milton Stewart School of Industrial and Systems Engineering, 
Georgia Institute of Technology, Atlanta, GA 30332, U.S.A.}
\email{ton.dieker@isye.gatech.edu}
\address{Industrial Engineering and Operations Research, Columbia University, New York, NY 10027, U.S.A.}
\email{dieker@columbia.edu}
\author{T.~Mikosch}
\address{University of Copenhagen, Department of Mathematics, 2100
  Copenhagen, Denmark}
\email{mikosch@math.ku.dk}

\date{\today} 

\begin{abstract} We propose an exact simulation method for Brown-Resnick random fields,
building on new representations for these stationary 
max-stable fields.  The main idea is to apply suitable 
changes of measure.
  
\end{abstract}

\keywords{Brown-Resnick random field; Brown-Resnick process; max-stable process; Gaussian random field; extremes; Pickands's constant; Monte Carlo simulation}

\maketitle

\section{Introduction}
Max-stable random fields are fundamental {\blue models} for spatial extremes.
These models have been coined by de Haan \cite{dehaan:1984}, and have recently found applications to 
extreme meteorological events such as rainfall modeling and extreme temperatures 
(Buishand et al.~\cite{buishand:dehaan:zhou:2008}, de Haan and Zhou \cite{dehaan:zhou:2008},
Dombry et al.~\cite{dombry:minko:ribatet:2013}, Davis et al.~\cite{davis:kluppelberg:steinkohl:2013}, Huser and Davison \cite{huser:davison:2014}). 
There are three different kinds of {\blue normalized} 
max-stable processes, with Gumbel, Fr\'echet, and Weibull marginals, respectively.
In what follows, we restrict ourselves to max-stable processes with Gumbel marginals; 
corresponding results for Fr\'echet and Weibull marginals can be
obtained by a monotone transformation of the Gumbel case. 
\par
This paper studies a particular class of max-stable random fields known as {\em Brown-Resnick random fields}.
Simulation of these and related processes is complicated, and 
{\blue the literature exclusively focuses on approximate simulation techniques;}
see for example Schlather \cite{schlather:2002}, 
Oesting et al.~\cite{oesting:kabluchko:schlather:2012}, Engelke et al.~\cite{engelke:kabluchko:schlather:2011},
Oesting and Schlather~\cite{oesting:schlather:2014}, Dombry et al.~\cite{dombry:minko:ribatet:2013}.
\par
This paper is the first to devise an {\em exact} simulation method for Brown-Resnick random fields.
The key ingredient is a new representation for Brown-Resnick random fields, which is of independent interest.
In fact, we show that there is an uncountable {\em family} of representations.
At the heart of our derivation of these representations lies a change of measure argument.
\par
We now describe the results in this paper in more detail.
For some index set $T\subset \bbr^d$, the  process
$(Y(t))_{t\in T}$ of {\blue real-valued} \rv s is {\em max-stable} (with
Gumbel marginals) if 
for a \seq\ of iid copies $(Y^{(i)}(t))_{t\in T}$, $i=1,2,\ldots$, 
of $(Y(t))_{t\in T}$ the following relation holds
\beao
\big(\max_{i=1,\ldots,n} Y^{(i)}(t)-\log n \big)_{t\in T} \eqd (Y(t))_{t\in
  T}\,,\quad n  \ge 1  \,,
\eeao
where this relation is interpreted in the sense of equality of the \fidi s. 
Then, in particular, all one-dimensional marginals of the process 
 $(Y(t))_{t\in T}$ are Gumbel distributed, i.e., $Y(t)$ has \df\ 
$\Lambda(x -c(t))= \exp(-\ex^{-(x-c(t))})$, $x\in\bbr$, for some \fct\ $c(t)\in \bbr$, $t\in T$. 
Throughout this paper, we work with $T=\R^d$.
\par 
In this paper, we consider a class of max-stable processes with representation
\beam\label{eq:BrownResnick}
\eta(t) = \sup_{i\ge 1} \big(V_i +W_i(t)-\sigma^2(t)/2\big)\,,\qquad t\in \bbr^d\,,
\eeam
where $\sigma^2(t)=\Var(W_1(t))$,
$t\in \bbr^d$, $(W_i)$ is a \seq\ of iid centered Gaussian processes 
with stationary increments on $\bbr^d$, 
and $(V_i)$ are the points of a Poisson process on $\bbr$
with intensity \ms\ $\ex^{-x}\,dx$. In the case of \BM s $\blue (W_i)$, 
the process \eqref{eq:BrownResnick} was considered by Brown and Resnick
\cite{brown:resnick:1977} and shown to be stationary. It is common to
refer to the more general model  \eqref{eq:BrownResnick} as {\em Brown-Resnick
random field} as well. 
\par
The representation (\ref{eq:BrownResnick}) is not particularly suitable for exact sampling.
Although $(V_i)$ and $(W_i)$ are
easily simulated, it turns out that the naive 
simulation approach of replacing $\sup_{i\ge 1}$ by $\sup_{i\le N}$ for some large $N$, may fail.
For example, assume  that $W_i$ is standard \BM\ on $\bbr$. Then, in
view of the law of the iterated logarithm, each of the
processes $\blue W_i(t)-\sigma^2(t)/2$ drifts to $-\infty$ a.s. as $t\to\infty$. In turn,
the process $\sup_{i\le N} \big(V_i +W_i(t)-\sigma^2(t)/2\big)$ drifts to $-\infty$
as $t\to\infty$ as well. In particular, the simulation of $\eta$ requires an increasing number $N$ 
if one aims at a sample path of the process on a larger interval. 
More importantly, it is unclear how $N$ should be chosen.
\par
Using our new representations, we obtain an exact sampling method for 
$\eta$ at the points $t_1,\ldots,t_n\in \bbr^d$, meaning that 
the output of the method has the same distribution as
$(\eta(t_1),\ldots,\eta(t_n))$. 
In our method, it is no longer problematic that the 
processes $W_i(t)-\sigma^2(t)/2$ drift away to $-\infty$
{\blue and a truncation point is automatically identified by our algorithm}.
\par
Several properties of Brown-Resnick processes readily follow from our representations, 
although they are not straightforward to see from (\ref{eq:BrownResnick}).
For instance, the process $\eta$ is stationary in the sense that $\eta$ has the
same \ds\ as $\eta(\cdot +c)$ for any choice of $c\in \bbr^d$. 
{\blue The process $\eta$ also has standard Gumbel marginals.} 
In deriving our representations from (\ref{eq:BrownResnick}),
$\sigma^2$ drops out and we recover the known fact that the law of $\eta$ only depends on the {\em variogram}
\[
\gamma(t) = \frac 12 \Expec(W(t)-W(0))^2\,,\qquad t\in \R^d.
\]
These properties were proved in Kabluchko et al.~\cite{kabluchko:schlather:dehaan:2009} with arguably
more elaborate techniques. 
\par
Given an exact simulation method for the Brown-Resnick process 
with Gumbel marginals, we also have an exact simulation method for 
this process with Fr\'echet or Weibull marginals. For
example, the processes $\ex^{\eta}$ and $-\ex^{-\eta}$
have  Fr\'echet $\Phi_1(x)=\ex^{-x^{-1}}$, $x>0$, and Weibull
$\Psi_1(x)=\ex^{-|x|}$, $x<0$, marginals, respectively.

\subsection*{Notation}
We use the symbols $W_1,W_2,\ldots$ for iid centered Gaussian random fields with 
stationary increments, variance function $\sigma^2$, and variogram $\gamma$.
We use the symbols $Z_1,Z_2,\ldots$ for iid Gaussian random fields with stationary increments,
mean function $-\gamma$, variance function $2\gamma$, variogram $\gamma$, 
{\blue and vanishing at the origin}.
A generic copy of these fields is denoted by $W$ and $Z$, respectively.

\section{Representations}
In this section we provide new representations for the 
Brown-Resnick random field $\eta$ given in \eqref{eq:BrownResnick}.
These representations arise from a change of measure.
We make the same assumptions on the stationary Brown-Resnick process 
as in the previous section. 
All proofs for this section are in Section~\ref{sec:proofs}.
We fix the functions $\sigma^2$ and $\gamma$ throughout this section.

The following theorem is the main result of this section.

\bth
\label{thm:main}
Suppose we are given an arbitrary \pro y \ms\ $\mu$ on $\R^d$.
Consider 
\[
\zeta(t)= \sup_{i\ge 1} 
\Big(V_i + Z_i(t-T_i) - \log\big(\int_{\R^d} \exp\big(Z_i(s-T_i) \big)\,\mu(ds)\big)\Big)\,,\qquad t\in \R^d\,,
\]
where $\blue \big((T_i,V_i)\big)_{i\ge 1}$ are the points of a Poisson
process on $\blue \R^d\times \R$
with intensity measure 
$\blue \mu(dt)\times \ex^{-v}dv$.
Then the random fields $(\eta(t))_{t\in \R^d}$ and $(\zeta(t))_{t\in \R^d}$ have the same distribution.
\ethe

\bre
There is a continuum of random fields with the same distribution as $\eta$, one for each measure $\mu$.
\ere
\bre
{\blue 
Under the assumptions of Theorem~\ref{thm:main}, it is known that
the field $(\zeta'(t))_{t\in\R^d}$ with
\[
\zeta'(t)= \sup_{i\ge 1}  \Big(V_i + Z_i(t-T_i) \Big) \,,\qquad t\in \R^d
\]
also has the same distribution as $(\eta(t))_{t\in\R}$.
Although the fields $\zeta$ and $\zeta'$ differ due to the additional 
log-term, the theorem states they have the same distribution.
This surprising fact becomes perhaps more plausible after noting that, for every $i$,
\[
\log \Expec \big(\int_{\R^d} \exp\big(Z_i(s-T_i) \big)\,\mu(ds)\big)=0.
\]
}
\ere
\bre
\label{rem:simrep}
If $\sigma^2/2=\gamma$, then $(W_i(t-T_i))$ has the same 
distribution as $(W_i(t)-W_i(T_i))$.
The term $W_i(T_i)$ drops out of the expression for $\zeta$, so in that case the random field
\[
\sup_{i\ge 1}  \Big(V_i + W_i(t) - \gamma(t-T_i) - \log\big(\int_{\R^d} \exp\big(
W_i(s) -\gamma(s-T_i)\big)\mu(ds)\big)\Big)\,,\qquad t\in \R^d\,,
\]
also has the same distribution as $(\eta(t))_{t\in \R^d}$.
\ere
\bre 
{\blue Oesting et al.~\cite{oesting:kabluchko:schlather:2012,oesting:schlather:zhou:2013}} provided various alternative \pp\ \rep s of Brown-Resnick random fields.
These \rep s are different from ours, {\blue although they appear similar in spirit.}
{\blue The paper \cite{oesting:kabluchko:schlather:2012}}
proposed to introduce random time shifts of the processes $W_i$
and used this idea to derive approximate sampling methods for $\eta$. 
{\blue The paper \cite{oesting:schlather:zhou:2013} focused on a
much wider class of max-stable processes than this paper.}
\ere

Theorem~\ref{thm:main} leads to the following three well known facts
proved in Kabluchko et al.~\cite{kabluchko:schlather:dehaan:2009}. 
\begin{corollary}
\label{cor:stationary}
The field $\eta$ is stationary.
\end{corollary}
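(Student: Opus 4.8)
The plan is to exploit the freedom in the choice of $\mu$ afforded by Theorem~\ref{thm:main}: shifting the field $\eta$ by $c$ corresponds merely to shifting the driving measure $\mu$, and since \emph{every} choice of probability measure yields the same law, stationarity is immediate. Concretely, I would fix $c\in\R^d$ and an arbitrary probability measure $\mu$ on $\R^d$, let $\zeta$ be the field from Theorem~\ref{thm:main} associated with $\mu$ (so that $\eta\eqd\zeta$), and show that $\big(\zeta(t+c)\big)_{t\in\R^d}$ is again an instance of the same construction, for a shifted measure $\wt\mu$.

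The main computation is to rewrite $\zeta(\cdot+c)$. Since the normalizing integral does not depend on the evaluation point,
\[
\zeta(t+c)= \sup_{i\ge 1}
\Big(V_i + Z_i(t+c-T_i) - \log\big(\int_{\R^d} \exp\big(Z_i(s-T_i)\big)\,\mu(ds)\big)\Big).
\]
I would set $\wt T_i = T_i - c$ and substitute $s\mapsto s-c$ in the integral, so that $t+c-T_i = t-\wt T_i$ and $\int_{\R^d}\exp(Z_i(s-T_i))\,\mu(ds)=\int_{\R^d}\exp(Z_i(s-\wt T_i))\,\wt\mu(ds)$, where $\wt\mu$ is the image of $\mu$ under the shift $t\mapsto t-c$. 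This yields
\[
\zeta(t+c)= \sup_{i\ge 1}
\Big(V_i + Z_i(t-\wt T_i) - \log\big(\int_{\R^d} \exp\big(Z_i(s-\wt T_i)\big)\,\wt\mu(ds)\big)\Big).
\]

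The point to verify is that the right-hand side is genuinely of the form required by Theorem~\ref{thm:main}. The marks $V_i$ and the fields $Z_i$ are untouched by the relabelling $T_i\mapsto\wt T_i$, so $\big((\wt T_i,V_i)\big)_{i\ge1}$ is the image of the original Poisson process under the shift in the first coordinate; by the mapping theorem for Poisson processes it is again a Poisson process, now with intensity $\wt\mu(dt)\times\ex^{-v}dv$, and the $Z_i$ remain iid copies of $Z$ independent of it. Since shifts preserve total mass, $\wt\mu$ is again a probability measure, so Theorem~\ref{thm:main} applies with $\wt\mu$ in place of $\mu$ and gives $\big(\zeta(t+c)\big)_{t\in\R^d}\eqd\eta$. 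Finally, $\eta\eqd\zeta$ gives $\eta(\cdot+c)\eqd\zeta(\cdot+c)$, and combining the two identities yields $\eta(\cdot+c)\eqd\eta$, which is the stationarity claimed.

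The only real obstacle is bookkeeping: ensuring the shift by $c$ is absorbed \emph{simultaneously} and consistently in the field argument, in the Poisson locations, and in the integrating measure, and confirming that the shifted $\wt\mu$ remains a probability measure so that Theorem~\ref{thm:main} is applicable. Once this is checked, no estimates are needed and the corollary follows directly.
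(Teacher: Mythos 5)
Your proof is correct: the shift computation, the substitution in the normalizing integral, and the appeal to the Poisson mapping theorem (with the marks $Z_i$ and the $V_i$ untouched, so that $\big((\wt T_i,V_i)\big)$ is Poisson with intensity $\wt\mu(dt)\times\ex^{-v}dv$ and independence from the $Z_i$ is preserved) all go through, and the shifted $\wt\mu$ is indeed still a probability measure. The paper proves the corollary by the degenerate instance of exactly this idea: it takes $\mu=\delta_{t^*}$, in which case all $T_i=t^*$ and the log-term vanishes identically (since $Z_i(0)=0$), so Theorem~\ref{thm:main} says the law of $\big(\sup_{i\ge 1}(V_i+Z_i(t-t^*))\big)_{t\in\R^d}$ equals that of $\eta$ for every $t^*$, and since shifting $t$ by $c$ is the same as replacing $t^*$ by $t^*-c$, stationarity is immediate with no measure-theoretic bookkeeping at all. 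Your general-$\mu$ argument buys a slightly stronger structural statement --- the entire family of representations in Theorem~\ref{thm:main} is equivariant under translations, with $\zeta(\cdot+c)$ being the representation attached to the pushforward $\wt\mu$ --- and it does not use the normalization $Z(0)=0$; the price is the extra care with the mapping theorem and the change of variables in the integral, which the Dirac choice makes trivial. Both proofs rest on the same feature of the theorem, namely that one law is shared by an uncountable family of representations, so either is acceptable.
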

\begin{proof}
Let $\mu$ be a Dirac point mass at some arbitrary $t^*\in\R^d$. Theorem~\ref{thm:main} implies that the random field
$(\sup_{i\ge 1} (V_i+Z_i(t-t^*)))_{t\in \R^d}$ has the same distribution as $(\eta(t))_{t\in \R^d}$.
In particular, the distribution does not depend on $t^*$.
\end{proof}

\begin{corollary}
The one-dimensional marginals of $(\eta(t))_{t\in\R^d}$ have the Gumbel distribution.
\end{corollary}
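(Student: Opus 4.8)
The plan is to exploit the freedom in the choice of the probability measure $\mu$ granted by Theorem~\ref{thm:main}. To read off the law of the marginal $\eta(t)$ at a fixed point $t\in\R^d$, I would take $\mu$ to be the Dirac mass $\delta_t$ at that very point. With this choice, the Poisson process $\big((T_i,V_i)\big)_{i\ge1}$ on $\R^d\times\R$ with intensity $\delta_t(ds)\times \ex^{-v}dv$ has all of its spatial coordinates equal to $t$, i.e.\ $T_i=t$ for every $i$, while the $(V_i)$ are simply the points of a Poisson process on $\R$ with intensity $\ex^{-v}dv$.

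Next I would simplify the representation of $\zeta(t)$ under this choice. Since $T_i=t$, the normalizing log-integral collapses to $\log(\int_{\R^d}\exp(Z_i(s-t))\,\delta_t(ds))=\log(\exp(Z_i(0)))=Z_i(0)=0$, using that the fields $Z_i$ vanish at the origin; for the same reason the drift term equals $Z_i(t-T_i)=Z_i(0)=0$. Hence $\zeta(t)=\sup_{i\ge1}V_i$, and Theorem~\ref{thm:main} tells us this random variable has the same law as $\eta(t)$.

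Finally I would compute the distribution of $M:=\sup_{i\ge1}V_i$ directly from the Poisson structure. For any $x\in\R$, the event $\{M\le x\}$ is precisely the event that the Poisson process has no points in $(x,\infty)$, which by the void probability has probability $\exp(-\int_x^\infty \ex^{-v}\,dv)=\exp(-\ex^{-x})=\Lambda(x)$. This is exactly the standard Gumbel law, so $\eta(t)$ is Gumbel distributed, as claimed.

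There is essentially no obstacle here: the whole argument rests on the single judicious choice $\mu=\delta_t$, which forces both the drift term and the log-normalizer to vanish through the origin-vanishing property of $Z$. The only point deserving a line of care is to confirm that $\sup_{i\ge1}V_i$ is almost surely finite and that the void-probability computation is legitimate, both of which follow at once from $\int_x^\infty \ex^{-v}\,dv<\infty$.
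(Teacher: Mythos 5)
Your proposal is correct and matches the paper's own argument: the paper likewise takes $\mu$ to be a Dirac point mass, so that the drift and log-normalizer terms vanish via $Z_i(0)=0$ and $\eta(t)\eqd\sup_{i\ge 1}V_i$. The only difference is that you spell out the standard void-probability computation $\Pro(\sup_{i\ge 1}V_i\le x)=\exp(-\ex^{-x})$, which the paper leaves implicit.
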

\begin{proof}
If we let $\mu$ be a point mass as in the proof of the preceding corollary,
then we find that $\eta(t)$ has the same distribution as $\sup_{i\ge 1} V_i$ 
for every $t\in \R^d$.
\end{proof}

\begin{corollary}
The distribution of $(\eta(t))_{t\in\R^d}$ only depends on the variogram $\gamma$.
\end{corollary}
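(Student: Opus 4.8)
The plan is to read the result off directly from Theorem~\ref{thm:main} by checking that none of the ingredients of the field $\zeta$ carries information beyond the variogram $\gamma$. First I would fix an arbitrary probability measure $\mu$ on $\R^d$. By the theorem, $(\eta(t))_{t\in\R^d}\eqd(\zeta(t))_{t\in\R^d}$, so it suffices to show that the law of $\zeta$ depends only on $\gamma$. The field $\zeta$ is built from exactly two objects: the Poisson process $((T_i,V_i))_{i\ge1}$ on $\R^d\times\R$ with intensity $\mu(dt)\times \ex^{-v}\,dv$, and an independent sequence $(Z_i)_{i\ge 1}$ of iid copies of the Gaussian field $Z$. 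The Poisson intensity involves only the chosen measure $\mu$, and not $\sigma^2$ or $\gamma$; so every $\gamma$-dependent quantity enters $\zeta$ exclusively through the fields $Z_i$. In particular $\sigma^2$ has already dropped out.

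It then remains to argue that the law of $Z$ is itself a function of $\gamma$ alone, which is the only step with genuine content. Since $Z$ is Gaussian, its distribution is pinned down by its mean function and its covariance function. By the specification fixed in the Notation section, the mean function is $-\gamma$, which depends only on $\gamma$. For the covariance I would pass to the centered field $\wt Z = Z+\gamma$, which is a centered Gaussian field with stationary increments, vanishes at the origin, and has variogram $\gamma$. Using stationarity of the increments together with $\wt Z(0)=0$ gives
\[
\Cov(Z(s),Z(t))=\Cov(\wt Z(s),\wt Z(t))=\gamma(s)+\gamma(t)-\gamma(s-t),
\]
and in particular $\Var(Z(t))=2\gamma(t)$ (using $\gamma(0)=0$), consistent with the stated variance function $2\gamma$. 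Hence both the mean and the covariance of $Z$, and therefore the entire law of $Z$, are determined by $\gamma$.

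Combining the two observations, the distribution of $\zeta$ depends only on $\gamma$ for the fixed $\mu$, and by Theorem~\ref{thm:main} so does the distribution of $\eta$. I expect the only point requiring care to be the covariance computation above: one must check that the three descriptors ``stationary increments'', ``vanishing at the origin'', and ``variogram $\gamma$'' really do fix the full second-order (hence Gaussian) structure of $Z$, so that no hidden dependence on $\sigma^2$ survives.
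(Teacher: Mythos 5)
Your proposal is correct and follows essentially the same route as the paper, whose proof is a two-line compression of your argument: the law of $\zeta$ in Theorem~\ref{thm:main} depends on $\gamma$ only through the fields $Z_i$, which are completely determined by $\gamma$. Your expansion of that last point—pinning down the Gaussian law of $Z$ via its mean $-\gamma$ and the covariance $\gamma(s)+\gamma(t)-\gamma(s-t)$ derived from stationary increments and $Z(0)=0$—is exactly the detail the paper leaves implicit, and it is carried out correctly.
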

\begin{proof}
Since the processes $Z_i$ are completely determined by $\gamma$, the law of $(\zeta(t))_{t\in \R^d}$
depends only on $\gamma$. Theorem~\ref{thm:main} therefore immediately yields the claim.
\end{proof}

There are some interesting connections between Brown-Resnick random fields and familiar quantities in 
extreme value theory, which simply follow from the known finite-dimensional distribution functions of such fields.
Details on this distribution function can be found in Section~\ref{sec:proofs} (specifically Lemma~\ref{lem:suplemma});
for now, we note that
if $\eta$ is stochastically continuous we have, for any $N>0$,
\beao
\Pro \Big(\sup_{t\in [0,N]^d} \eta(t)\le x\Big)
&=& \exp\Big(-\ex^{-x}\,\Expec \exp\big(\sup_{t\in [0,N]^d}
Z(t)\big)\Big),\qquad x\in\bbr\,,
\eeao
and therefore
\beao
\Pro \Big(\sup_{t\in [0,N]^d} \eta(t)-d\log N \le x\Big)
&=& \exp\Big(-\ex^{-x}\,N^{-d}\,\Expec \exp \big(\sup_{t\in [0,N]^d}
Z(t)\big)\Big) \,,\qquad x\in\bbr\,.
 \eeao
Dieker and Yakir~\cite[Cor.~1]{dieker:yakir:2013}, show
that the set function
\[
f(A) = \Expec \exp \big(\sup_{t\in A} Z(t)\big)\,, \qquad A\subset \R^d
\]
is translation invariant: $f(A)= f(t+A)$ for $t\in\R^d$; {\blue
this also follows from Corollary~\ref{cor:stationary}}. 
(They only write out the one-dimensional {\blue fractional
Brownian motion} case, but the {\blue more general}
case follows from exactly the same arguments; it is based on Lemma~\ref{lem:changeofmeasure} below.)
Moreover, $f$ is subadditive in the sense that $f(A_1\cup A_2)\le f(A_1)+f(A_2)$ for disjoint subsets $A_1,A_2\subset\R^d$.
A basic fact about such functions (e.g., Xanh \cite{nguyen:1979}) is that $f(A)$ grows like the Lebesgue measure of $A$ for large sets $A$.
In particular, this result implies that the limit
\beao
\lim_{N\to\infty}N^{-d}\,\Expec \exp \Big(\sup_{t\in [0,N]^d} Z(t)\Big)
\eeao
exists. This quantity is known as {\em Pickands's constant}; {\blue we
  refer to the monograph by Piterbarg \cite{piterbarg:1996} for an
  extensive discussion of these quantities.}
The numerical determination of this constant and
the simulation of the Brown-Resnick process $\eta$ suffer from the
same problems mentioned in the Introduction.  Dieker and Yakir~\cite{dieker:yakir:2013} proposed a Monte Carlo method for
determining the Pickands constant. 
\par
The discrete analogs of Pickands's constant are connected to extremal 
indices of the Brown-Resnick processes.
Assume $d=1$ and consider a Brown-Resnick process $(\eta(t))_{t
  \in \bbr}$. Its restriction to the integers yields a strictly
stationary time series $(\eta(i))_{i\in\bbz}$.
For $x\in \R$ we have
\beao
\Pro \Big( \max_{i=1,\ldots,n} \eta(i) - \log n \le x\Big) = \exp \Big(-\ex^{-x} 
n^{-1}  \Expec\Big[ \max_{i=1,\ldots,n}\ex^{Z(i)}\Big]\Big)\,.
\eeao
This leads to the limit relation
\beao
\lim_{n\to\infty} 
\Pro \Big( \max_{i=1,\ldots,n} \eta(i) - \log n \le x\Big)=
\Lambda^\theta(x)\,,\qquad x\in\bbr\,,
\eeao
where the limit
\beao
\theta=\lim_{\nto}n^{-1}  \Expec \Big[\max_{i=1,\ldots,n} \ex^{Z(t)} \Big]
\eeao
exists by subadditivity and translation invariance as in the
continuous case. It is well known (see Leadbetter et
al.~\cite{leadbetter:lindgren:rootzen:1983}, cf.~Section 8.1 in
Embrechts et al.~\cite{embrechts:kluppelberg:mikosch:1997}) that
$\theta$ is a number in $[0,1]$.
The quantity $\theta$ is the {\em extremal index} of the
stationary \seq\ $(\eta(i))_{i\in\bbz}$. 
The reciprocal of this  quantity is often
interpreted as the expected value of the cluster size of
 high-level exceedances of the \seq\ $(X_i)$; see for example
\cite{leadbetter:lindgren:rootzen:1983}; cf.~ Section 8.1 in \cite{embrechts:kluppelberg:mikosch:1997}.
The constant $\theta$ appears in Dieker and Yakir~\cite{dieker:yakir:2013} 
as a special case of the constants $\eta {\mathcal H}_\alpha^\eta$; see Proposition 3 
in \cite{dieker:yakir:2013} 
for a characterization alternative {\blue ???} to the extremal index. Although we do not have a proof
that $\theta$ is smaller than Pickands's constant in the
continuous-time case, simulation evidence indicates that this fact is true.

\section{A simulation algorithm}\label{sec:alg}
This section presents a simulation algorithm for Brown-Resnick random fields on a discrete set of points $t_1,\ldots,t_n\in \R^d$.
We may assume that $\sigma^2/2=\gamma$ in this section.
Since Theorem~\ref{thm:main} gives a different representation for each choice of $\mu$,
it would be interesting to know which choice leads to the fastest algorithm.
Here we simply let $\mu$ be uniform on $\{t_1,\ldots,t_n\}$.

Remark~\ref{rem:simrep} shows that the vector $(N(t_1),\ldots,N(t_n))$ with, for  $j=1,\ldots,n$,
\[
N(t_j) = \sup_{i\ge 1} \left(V_i + W_i(t_j) -  \gamma(t_j-T_i)- \log\Big(n^{-1} \sum_{\ell=1}^n 
\exp(W_i(t_\ell) -  \gamma(t_\ell-T_i))\Big)\right)
\]
has the same distribution as $(\eta(t_1),\ldots,\eta(t_n))$,
where $\big((V_i,T_i)\big)_{i\ge 1}$  belong to a Poisson process on
$\bbr\times \{t_1,\ldots,t_n\}$ with intensity \ms\ $\ex^{-x}dx\times
\big(n^{-1}\sum_{i=1}^n \delta_{t_i}(dy)\big)$.
We slightly rewrite the above display as
\[
N(t_j) =\sup_{i\ge 1} \left(V_i+\log n + W_i(t_j) -  \gamma(t_j-T_i)- \log\Big(\sum_{\ell=1}^n 
\exp(W_i(t_\ell) -  \gamma(t_\ell-T_i))\Big)\right).
\]
This is the representation we use for our simulation algorithm.

A point $V_i$ on $\R$ gives rise to a `cluster' of points 
$\{C_i(t_j): j=1,\ldots,n\}$ with
\[
C_i(t_j) = 
(V_i+\log n) + W_i(t_j) -  \gamma(t_j-{T_i})-\log \Big(
\sum_{\ell=1}^n \exp(W_i(t_\ell) -  \gamma(t_\ell-{T_i}))\Big).
\] 
These cluster points can be visualized by interpreting them as belonging to different levels depending 
on the value of $j$; see Figure~\ref{fig:algorithm}. The variable $N(t_j)=\sup_{i\ge 1} C_i(t_j)$ 
is then the maximum of all cluster points on the $j$-th level.
The crucial insight is that {\em only a finite number of points/cluster pairs $(V_i,C_i)$ need to be generated, since
$C_i(t_j)\le V_i+\log n$ and we seek $\sup_{i\ge 1} C_i(t_j)$ for $j=1,\ldots,n$}.
The algorithm generates points/cluster pairs $(V+\log n,C)$ 
in decreasing order of $(V+\log n)$-value, until the next $(V+\log n)$-value 
is smaller than the current maximum over the cluster points on each level.
For instance, in Figure~\ref{fig:algorithm}, after $V_3+\log 4$ has been generated, none of the remaining
cluster points can change the values of $(N(t_1),\ldots,N(t_4))$, which have been given a different color.

\begin{figure}
\begin{center}
\includegraphics[width=\textwidth]{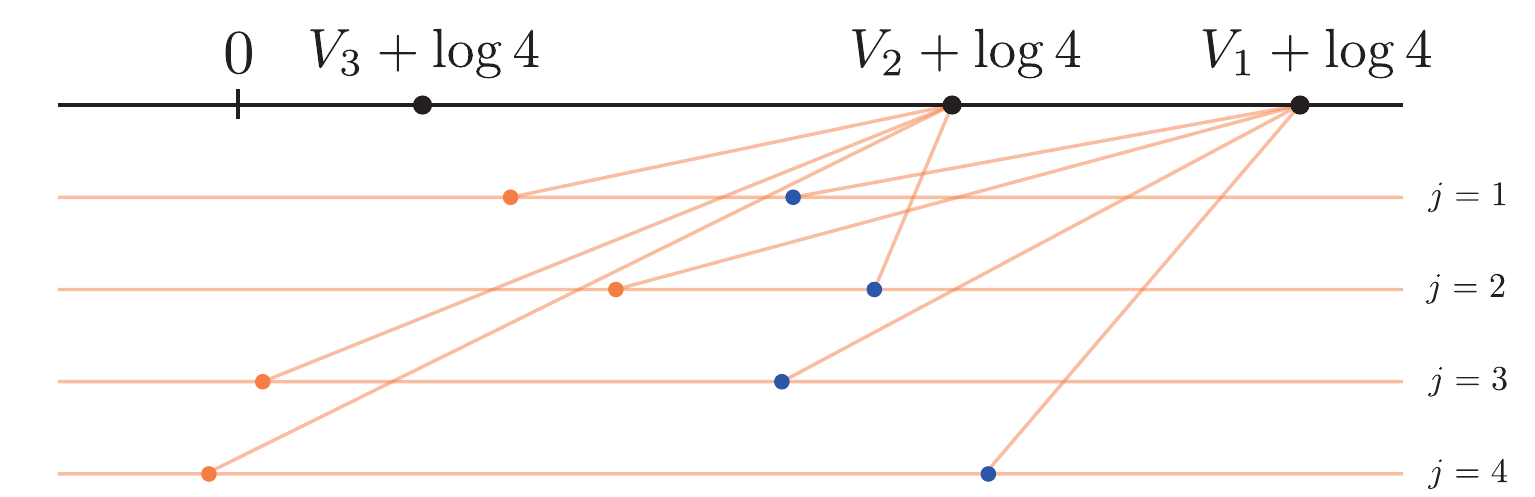}
\end{center}
\caption{
Illustration of our algorithm for $n=4$. 
The points $V_i+\log 4$ are generated in decreasing order.
Each `level' below the axis represents a value of $j$, and each $(V_i+\log 4)$-point
is connected to its cluster points $C_i(t_j)$.
The cluster points $C_i(t_j)$ always lie to the left of $V_i+\log 4$.}
\label{fig:algorithm}
\end{figure}

To get a sense of how many points of $V$ will be generated, let us consider the (degenerate) case 
where $t_1=\cdots=t_n=t$. We then have $C_i(t_j) = V_i$ for $j=1,\ldots,n$,
so the algorithm terminates after generating $\inf\{M: V_M+\log n< V_1\}$ points of $V$.
For large $n$, this implies that the number of points is of order $n$.

We remark that this algorithm is suitable for parallelization. 
Indeed, several points of the $V$-process can be generated simultaneously instead of one at the time,
with corresponding clusters being computed on different processors.
{\blue Specifically, with one master and $K$ workers, the algorithm would consist of a number of steps,
each of which computes the next $K$ clusters in parallel.
At each step of the algorithm, the master generates the next $K$
$V$-points in decreasing order. This is readily done since 
the points $(\ex^{-V_i})$ constitute a  standard Poisson process on $\R_+$. 
Each of the $K$ clusters would then be computed on a worker node,
after which the master checks whether the algorithm can be terminated or whether further steps are needed. }

\section{Numerical experiments}
This section reports on several simulation experiments we have carried out in order to validate our
algorithm and to test its performance in terms of speed.
Throughout, we work with Brown-Resnick random fields with variogram 
$\gamma(t)=|t|^\alpha/2$ for some $\alpha\in(0,2]$. 
Appendix~\ref{app:matlab} has some implementation details.

\subsection*{Representative samples}
We have implemented the algorithm in R (see \cite{R-project})
in order to leverage the existing toolkit to generate the Gaussian random fields that are needed in our algorithm.
We use the R package \verb|RandomFields| by Schlather et al., which is available
through R's package manager.
Three representative samples of Brown-Resnick random fields are given in Figure~\ref{fig:BRfield}, with
various levels of a {\blue smoothness} parameter $\alpha$. We see that the paths
become rougher as $\alpha$ decreases, as it should be.
The random field is the maximum of random `mountains' (given by quadratic forms) if $\alpha=2$,
and our replication for $\alpha=3/2$ exhibits similar behavior in the sense that two mountains can
be distinguished.
\begin{figure}
\begin{center}
\includegraphics[width=50mm]{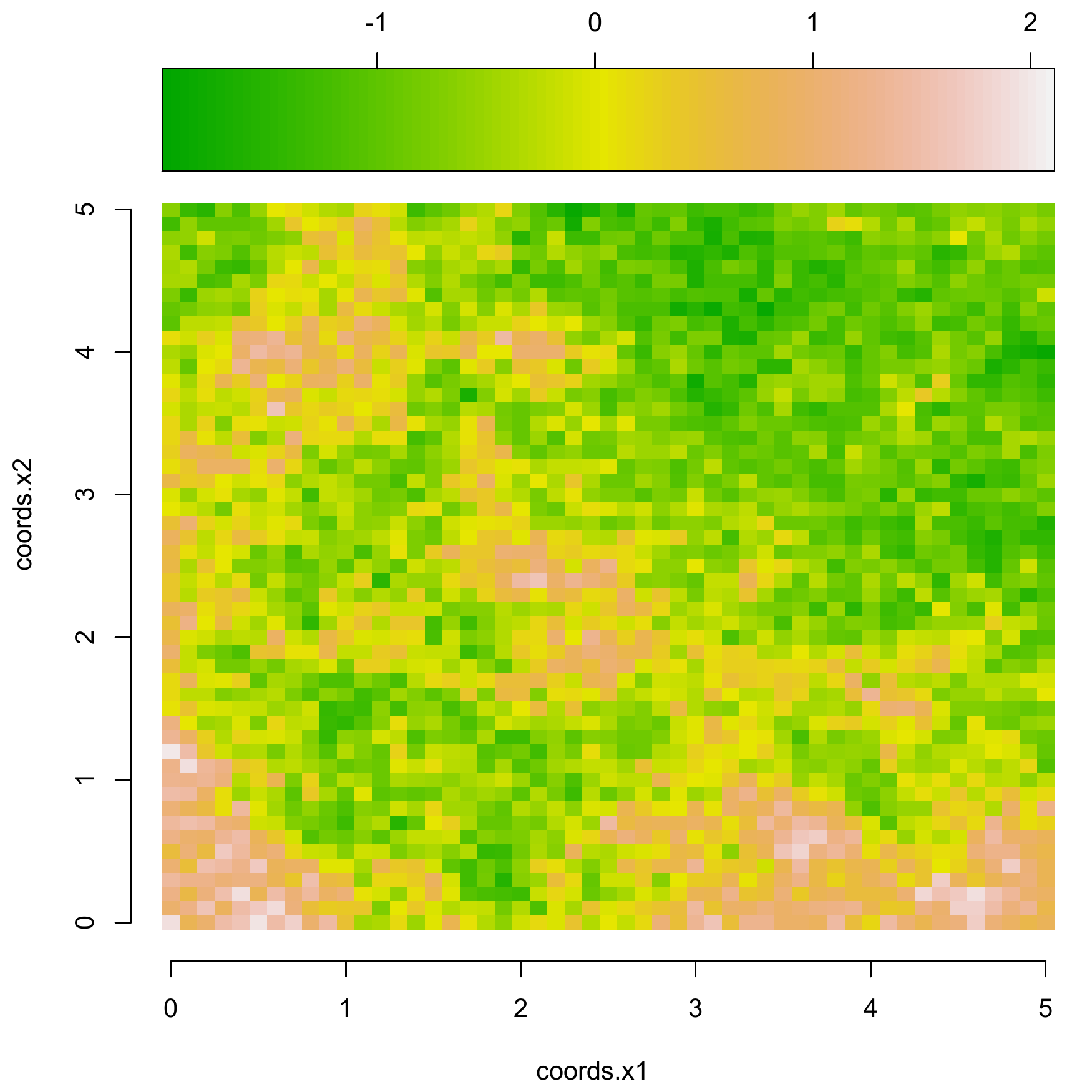}
\includegraphics[width=50mm]{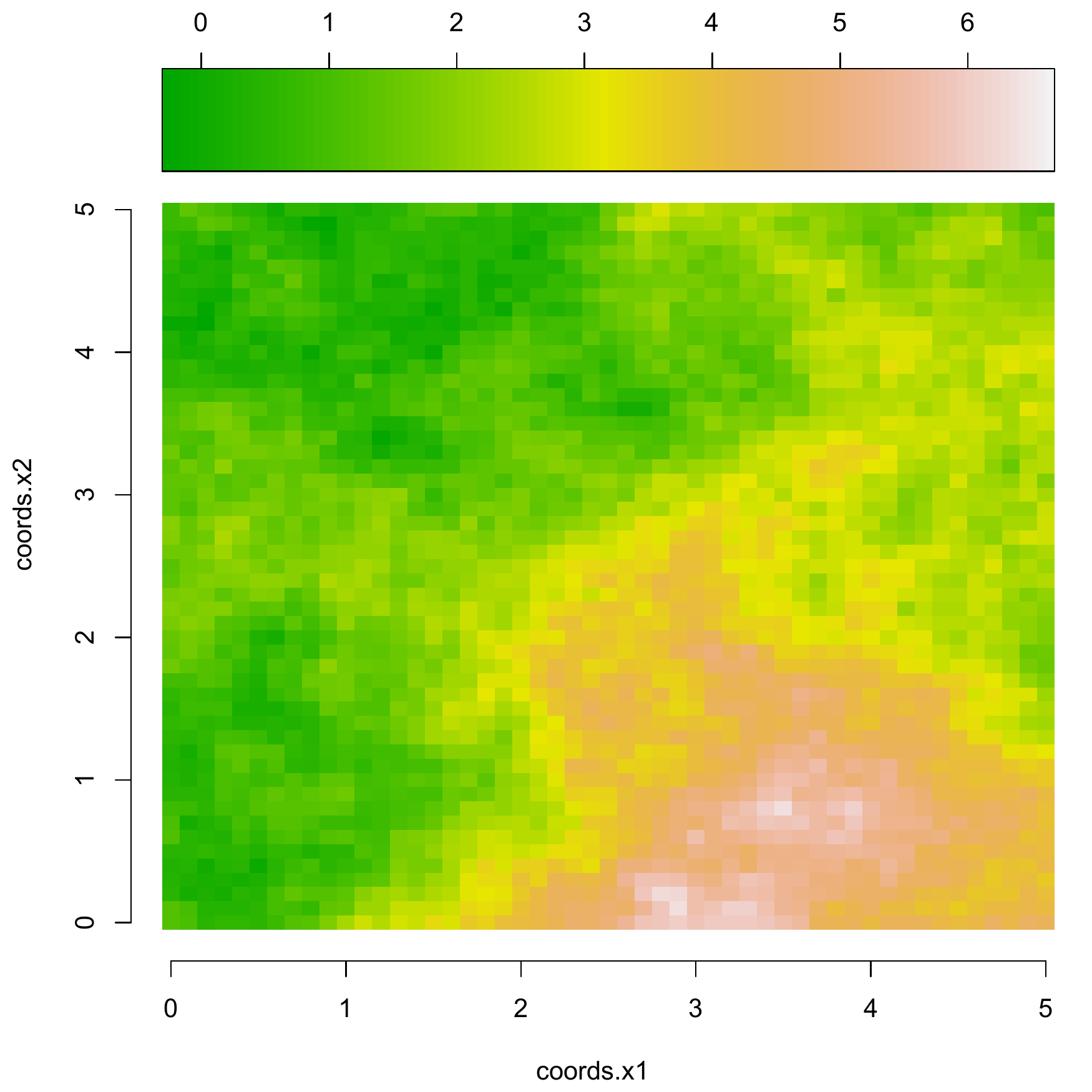}
\includegraphics[width=50mm]{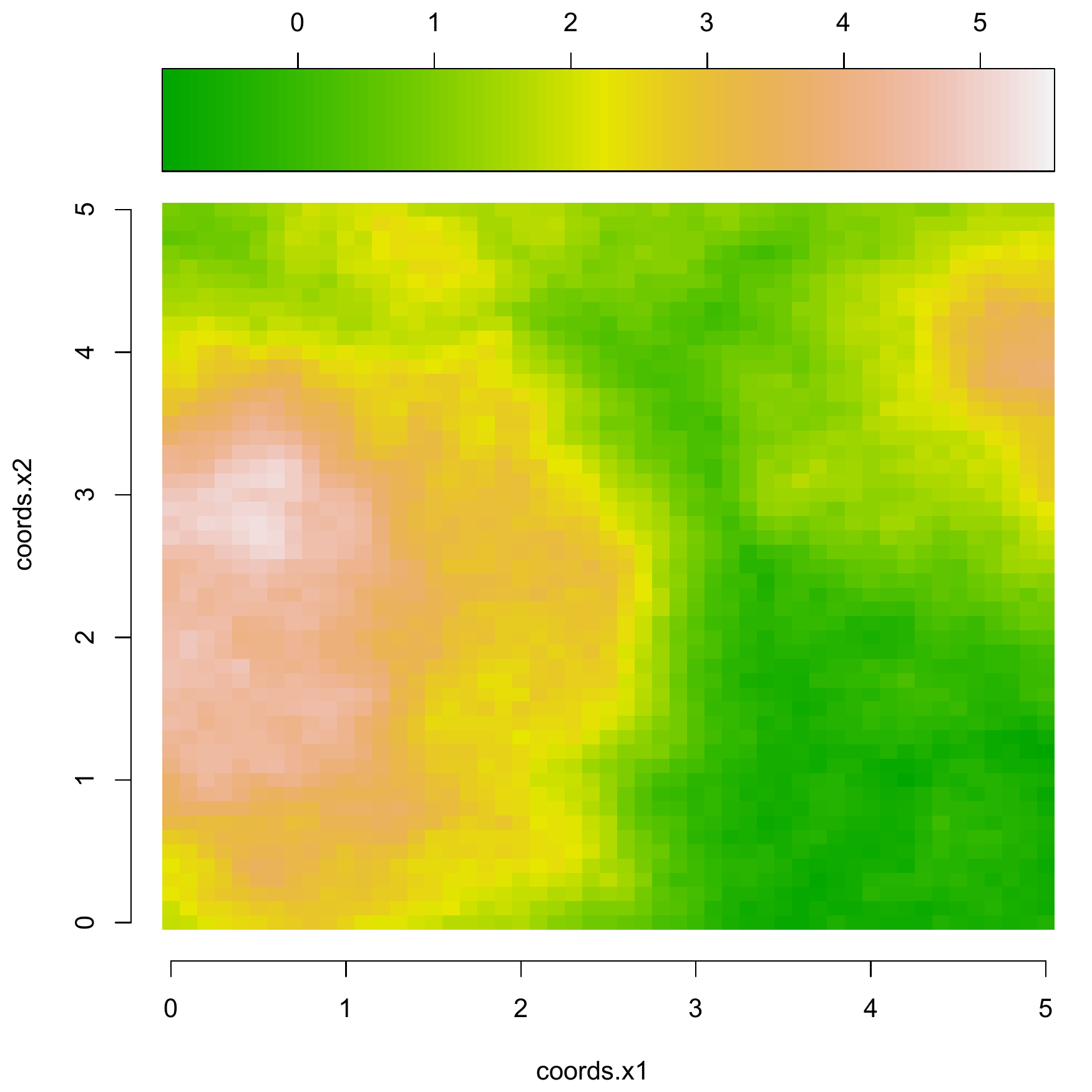}
\end{center}
\caption{Sample of a Brown-Resnick random field on $[0,5]^2$ with variogram $\gamma(t) = |t|^\alpha/2$
for $\alpha=1/2$, $\alpha=1$, $\alpha=3/2$ from left to right, respectively. The grid mesh is $0.1$.}
\label{fig:BRfield}
\end{figure}

\par
In the rest of this section, we carry out all experiments in
the one-dimensional case $d=1$ for computational ease.
Figure~\ref{fig:stationarity} depicts some representative one-dimensional samples for $\alpha=1$.
\begin{figure}
\begin{center}
\includegraphics[width=90mm]{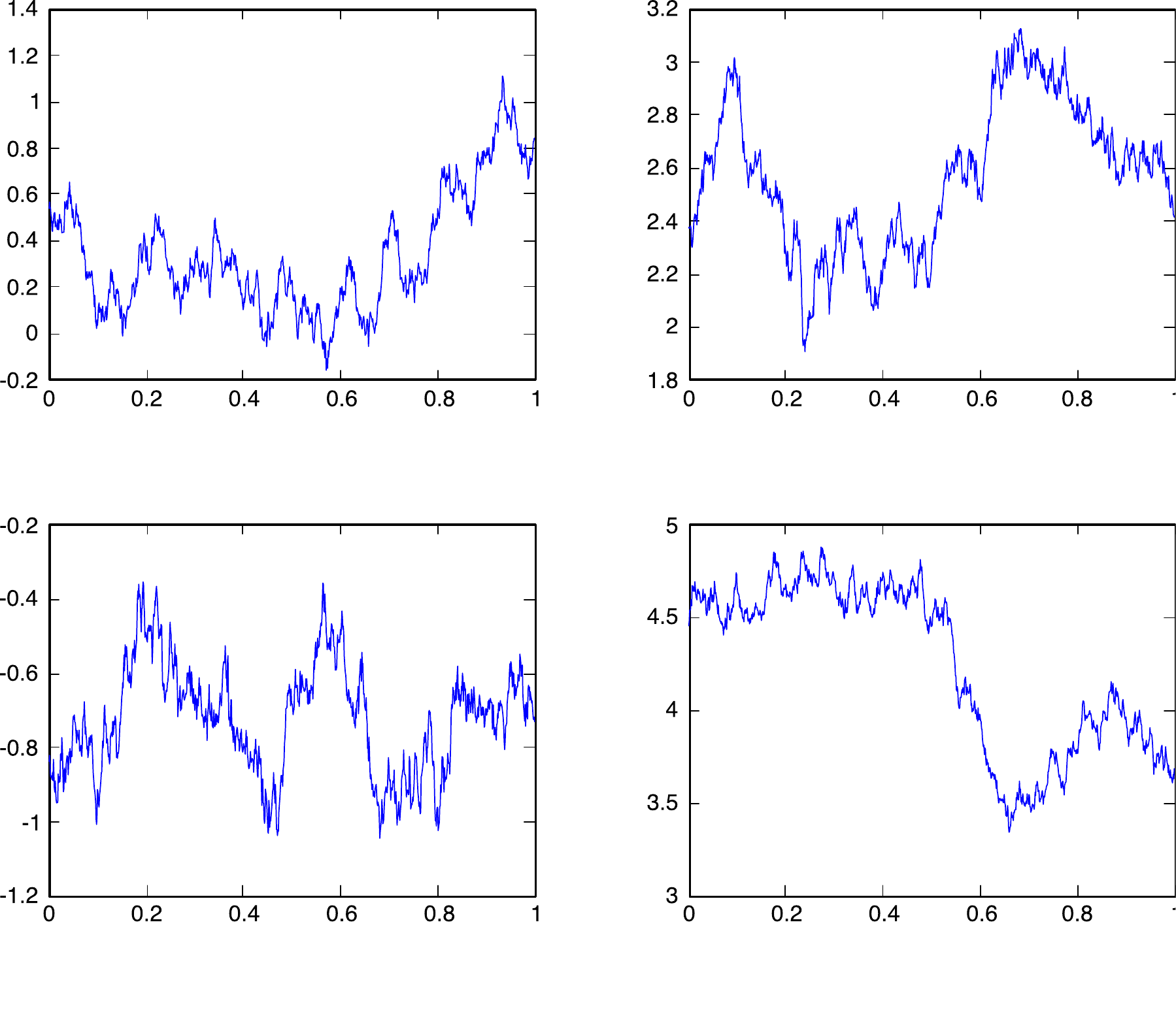}
\end{center}
\caption{Representative samples of a Brown-Resnick process on $[0,1]$ with variogram $\gamma(t)=|t|/2$. }
\label{fig:stationarity}
\end{figure}
Note that it indeed appears that these are realizations of a {\blue stationary} process even though 
our algorithm 
does not require truncating the number of Gaussian random field samples
if one aims at a sample path of the process on a larger interval.


\subsection*{Dependence structure}
We next verify whether our simulation algorithm captures the dependence
within the process correctly. To this end, we generated 1000 samples of
$\eta(0)\vee \eta(s)$ for $\alpha=1$ in the one-dimensional case.
This random variable has a (nonstandard) Gumbel distribution.

We note that
\begin{eqnarray*}
\lefteqn{-\log \Pro(\eta(0)\vee \eta(s) \le x)}\\
 &=& \ex^{-x} \left[\Pro(W(s) \le s/2) + \Expec \big(\ex^{W(s)-s/2}; W(s)>s/2\big)\right]\\
&=& \ex^{-(x-\log(2\Phi(\sqrt{s}/2)))},
\end{eqnarray*}
where $\Phi$ is the distribution function of the standard normal distribution.
%
The resulting Q--Q plot for $s=1-1/1024$ is given in Figure~\ref{fig:qqplotdependence}.
\begin{figure}
\begin{center}
\includegraphics[width=80mm]{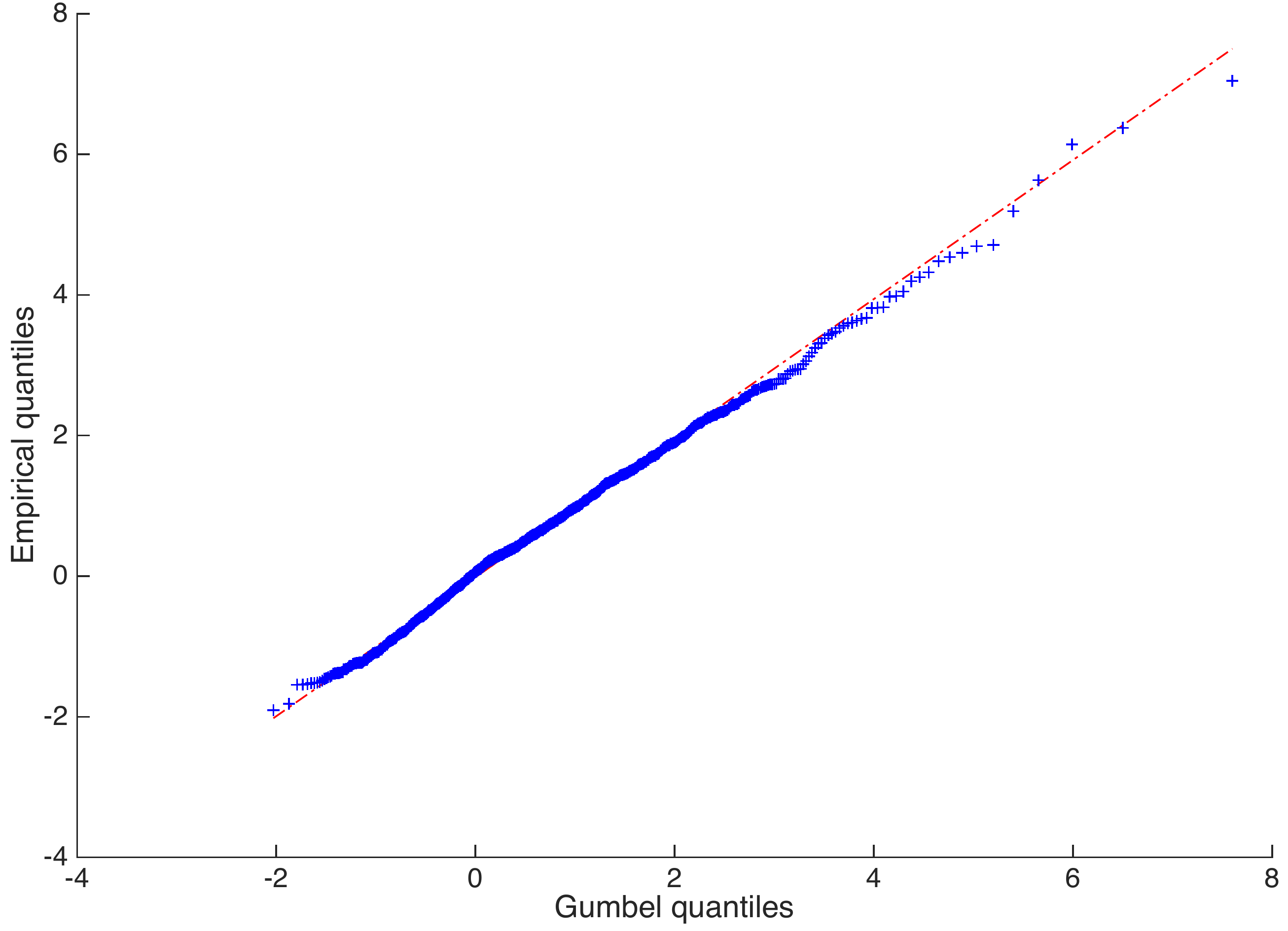}
\end{center}
\caption{Q--Q plot illustrating that our samples of $\eta(0)\vee \eta(s) -\log(2\Phi(\sqrt{s}/2))$ have a standard Gumbel distribution for $s=1-1/1024$.}
\label{fig:qqplotdependence}
\end{figure}

\subsection*{Number of clusters}
We next investigate numerically whether 
the dependency structure influences the number of points $V_i$ that are generated
by our algorithm for a single replication of the Brown-Resnick process. 
To do so, we generated 1000 replications of $(\eta(0),\eta(1/1024),\ldots,\eta(1-1/1024))$ 
for various values of $\alpha$.
Figure~\ref{fig:boxplotN} summarizes the results in a box plot.
\begin{figure}
\begin{center}
\includegraphics[width=96mm]{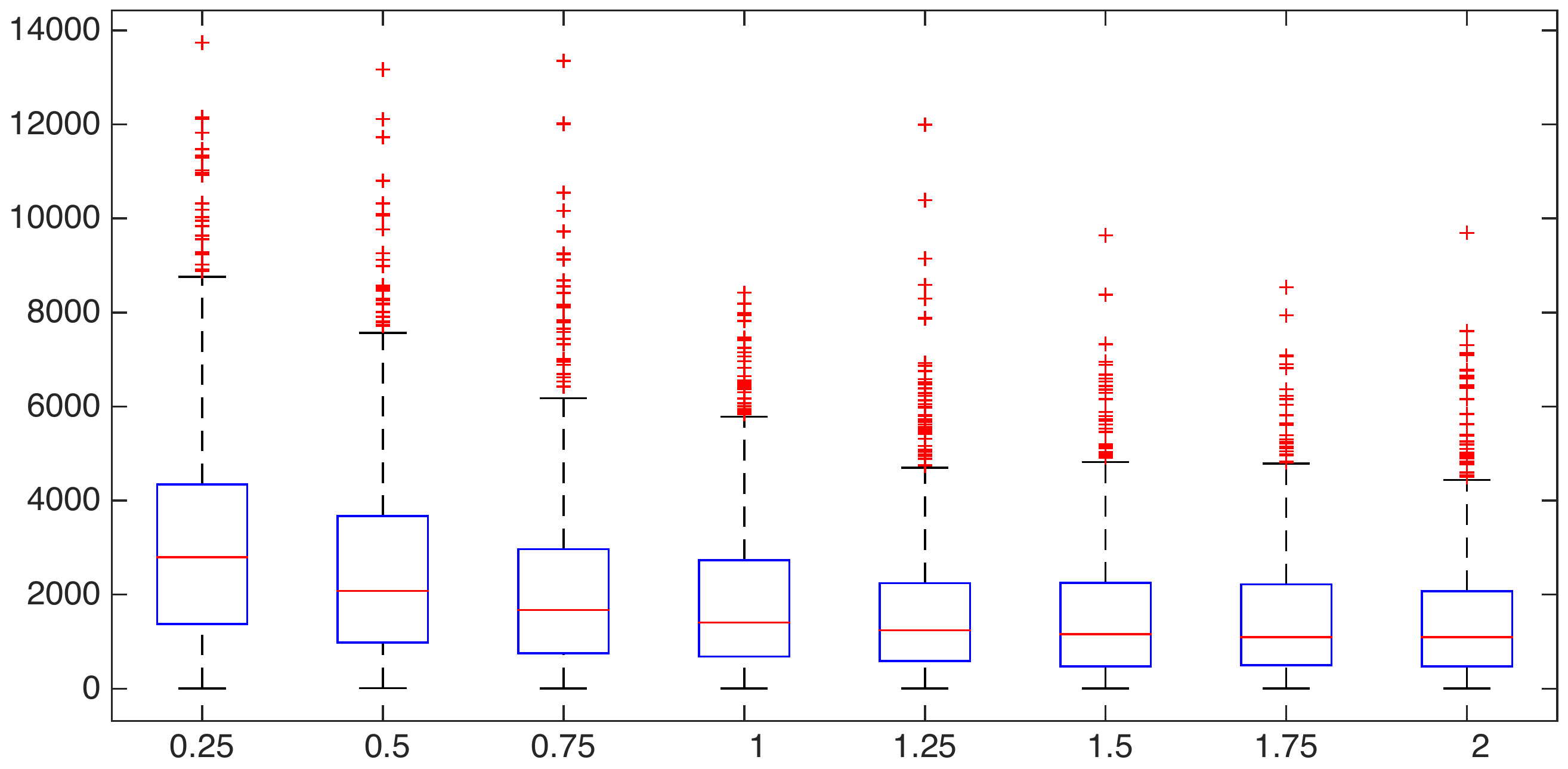}
\end{center}
\caption{Box plot showing the number of clusters generated as a function of $\alpha$, with $n=1024$.
The edges of the box are the 25th and 75th percentiles.}
\label{fig:boxplotN}
\end{figure}

The data provides evidence that rougher paths are harder to simulate,
which suggests that the order $n$ bound derived in 
Section~\ref{sec:alg} is in fact a lower
bound on the number of $V_i$ points that need to be generated.
In the code used for Figure~\ref{fig:boxplotN}, 
we preprocess some of the computations required for sampling the $W_i$.
This results in significant savings. 
We have not included this code in Appendix~\ref{app:matlab} for expository reasons.

We next compare the histogram of the number of $V_i$ points
for $\alpha=2$ with a fitted exponential density, 
see Figure~\ref{fig:histogram}.
\begin{figure}
\begin{center}
\includegraphics[width=76mm]{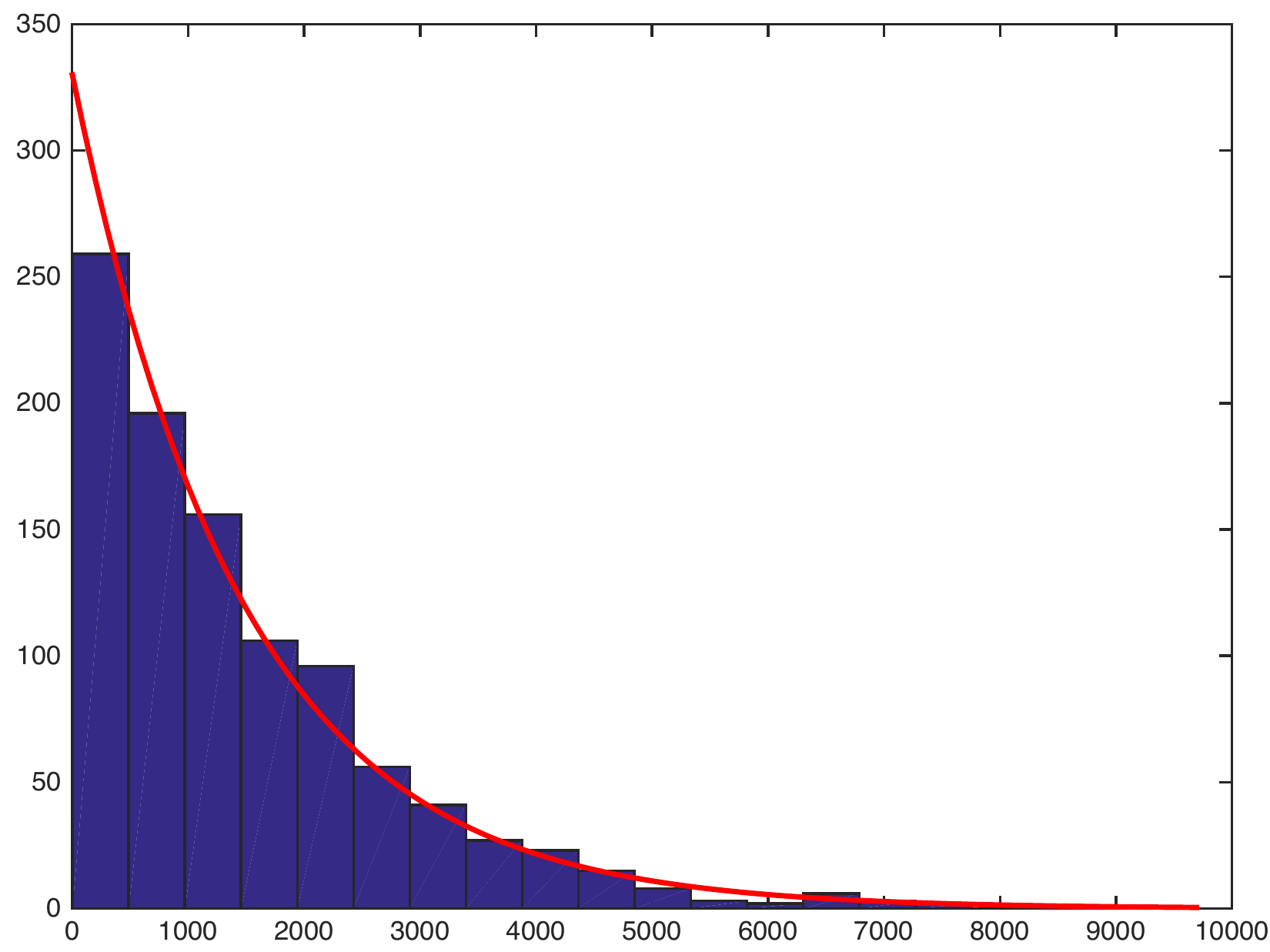}
\end{center}
\caption{Histogram of the number of $V_i$ points for $\alpha=2$, 
compared with an exponential density.}
\label{fig:histogram}
\end{figure}
This figure provides evidence that this distribution has light tails. 
For other values of $\alpha$, the corresponding histograms also 
indicate light tails, although the distribution looks more like a 
gamma distribution.

\subsection*{Speed}
The speed of our algorithm in practice heavily depends on how quickly the underlying
Gaussian random fields can be generated. In our one-dimensional case, 
we generate the Gaussian processes with the recent Matlab implementation 
by Kroese and Botev~\cite{kroese:botev:2013}. 
Theoretically, the computational effort needed to generate 
a sample from the underlying Gaussian process is independent of $\alpha$ in this implementation.
Thus, the running time depends linearly on the number 
of points $V_i$ that are generated by the algorithm, which is different for different samples.
 
In Matlab it is difficult to record CPU time (as opposed to elapsed time),
and we have observed wide variation (up to 50\%) in run time with exactly the same random input
on a dedicated CPU.
Thus, we keep the discussion at a high level. 
For the experiment reported in Figure~\ref{fig:boxplotN} with $n=1024$, 
each sample is generated in the order of seconds 
on a single core of a 2.7 GHz Intel Core i7 processor 
regardless the value of $\alpha$, with most runs less than a second and a few runs more than four seconds.


\section{Proofs}
\label{sec:proofs}
This section presents the proof of Theorem~\ref{thm:main}
We fix the functions $\sigma^2$ and $\gamma$ throughout this section. 
Contrary to the preceding two sections, we do not assume that $\gamma=\sigma^2/2$ but
we shall see that the function $\sigma^2$ vanishes from our calculations.

We start with an auxiliary lemma; 
{\blue see de Haan \cite{dehaan:1984} and Kabluchko et
al.~\cite{kabluchko:schlather:dehaan:2009} for proofs.} 
\ble
\label{lem:suplemma}
Let $(X_i)$ be iid copies of some random field $X$ on $\R^d$
and $(V_i)$ the points of a Poisson process on $\R$ with intensity measure $\ex^{-x}\,dx$.
If we write
\[
\xi(t) = \sup_{i\ge 1}\, (V_i + X_i(t))\,,\qquad t\in \R^d\,,
\]
then we have for $y_j\in\bbr\,,t_j\in \R^d\,,i=1,\ldots,n$,
\[
\Pro(\xi(t_1)\le y_1,\ldots, \xi(t_n)\le y_n) = \exp\Big(-\Expec \exp\Big(
  \max_{j=1,\ldots,n} (X(t_j) -y_j)\Big)\Big)\,.
\]
\ele

The following change of measure lemma plays a key role in our argument, and shows why
the variance function $\sigma^2$ vanishes from the calculations.
It is a field version of Lemma 1 in Dieker and Yakir~\cite{dieker:yakir:2013},
{\blue see also \cite[Prop.~2]{kabluchko:spectral:2009} for the underlying 
change of measure result}.
We only sketch the key idea of the proof insofar as it highlights the differences with
\cite{dieker:yakir:2013}, since the lemma follows from the same arguments as given there.

\begin{lemma}
\label{lem:changeofmeasure}
Fix $t\in \R^d$.
For a measurable functional $F$ on $(\R^d)^\R$ that is 
{\blue translation invariant}, 
we have
\[
\Expec \ex^{W(t)-\sigma^2(t)/2} F(W-\sigma^2/2) = \Expec F(\theta_t Z),
\]
where the shift $\theta_t$ is defined through $(\theta_t Z)(s) = Z(s-t)$.
\end{lemma}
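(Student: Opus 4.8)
The plan is to read $\ex^{W(t)-\sigma^2(t)/2}$ as a likelihood ratio and apply the exponential tilting (Cameron--Martin) formula for Gaussian fields, following Dieker and Yakir; the only genuinely new bookkeeping is the treatment of means through the variogram in the field setting. Since $W(t)\sim N(0,\sigma^2(t))$ we have $\Expec\,\ex^{W(t)-\sigma^2(t)/2}=1$, so $\ex^{W(t)-\sigma^2(t)/2}$ is the density of a probability measure $\widetilde\Pro$ relative to $\Pro$. First I would record the elementary fact that under $\widetilde\Pro$ the covariance of $W$ is unchanged while its mean is shifted by the covariance with the tilting variable: for every finite collection of coordinates this is the tilting formula for Gaussian vectors, and it extends to the whole field by Kolmogorov consistency. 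Hence $W(\cdot)$ under $\widetilde\Pro$ has the law of $W(\cdot)+\Cov(W(\cdot),W(t))$ under $\Pro$, and therefore
\[
\Expec\,\ex^{W(t)-\sigma^2(t)/2}F(W-\sigma^2/2)=\Expec\,F\big(W+\Cov(W(\cdot),W(t))-\sigma^2/2\big).
\]

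The next step is the mean computation that removes $\sigma^2$. Because $W$ has stationary increments, $\Var(W(s)-W(t))=2\gamma(s-t)$, so
\[
\Cov(W(s),W(t))=\tfrac12\big(\sigma^2(s)+\sigma^2(t)-2\gamma(s-t)\big)=\tfrac{\sigma^2(s)}2+\tfrac{\sigma^2(t)}2-\gamma(s-t).
\]
Subtracting $\sigma^2(s)/2$ cancels the $s$-dependent variance, leaving
\[
W(s)+\Cov(W(s),W(t))-\tfrac{\sigma^2(s)}2=W(s)-\gamma(s-t)+\tfrac{\sigma^2(t)}2,
\]
so the right-hand side above equals $\Expec\,F\big(W(\cdot)-\gamma(\cdot-t)+\sigma^2(t)/2\big)$.

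Finally I would invoke translation invariance, used here in the form $F(g+c)=F(g)$ for constants $c$, so that $F$ depends on its argument only through increments. This discards the constant $\sigma^2(t)/2$, and it remains to check that the increment process of $A:=W(\cdot)-\gamma(\cdot-t)$ has the same law as that of $\theta_t Z$. Choosing the base point $t$ and using stationary increments of $W$, the process $\{A(s)-A(t)\}_{s}=\{W(s)-W(t)-\gamma(s-t)\}_{s}$ has the law of $\{W(s-t)-W(0)-\gamma(s-t)\}_{s}$, which is precisely the law of $\{Z(s-t)\}_{s}=\{(\theta_t Z)(s)\}_{s}$, since $Z\eqd W(\cdot)-W(0)-\gamma$ is the field with mean $-\gamma$, variogram $\gamma$, and $Z(0)=0$. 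Invariance of $F$ under additive constants then yields $\Expec\,F(A)=\Expec\,F(\theta_t Z)$, which closes the chain of equalities.

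The hard part is conceptual rather than computational: one must justify the tilting identity at the level of the path functional $F$, not merely for finitely many coordinates, and then see clearly why $\sigma^2$ disappears. The cancellation is exactly what translation invariance delivers---$F$ sees only increments, whose law is governed by the variogram $\gamma$ alone---so the variance function, which enters only through marginal variances (the path's overall level), is invisible to $F$. This is where the field version departs from the one-dimensional argument of Dieker and Yakir: the explicit fractional-Brownian covariance there is replaced by the general variogram identity for stationary-increment fields used above.
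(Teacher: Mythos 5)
Your proof is correct and follows essentially the same route as the paper: an exponential change of measure $\widetilde\Pro(dA)=\ex^{W(t)-\sigma^2(t)/2}\,\Pro(dA)$, the variogram identity $\Cov(W(s),W(t))=\tfrac12(\sigma^2(s)+\sigma^2(t))-\gamma(s-t)$ to cancel $\sigma^2$, and translation invariance of $F$ plus stationarity of increments to identify the shifted field with $\theta_t Z$. The only cosmetic difference is that you invoke the standard Gaussian tilting formula (mean shift by covariance, extended by Kolmogorov consistency) where the paper verifies the same fact by computing moment generating functions coordinate-wise.
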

\begin{proof}[Proof sketch]
Set $\Q (A) = \Expec[\ex^{W(t)-\sigma^2(t)/2} 1_A]$, and write $\Expec^\Q$ for the expectation operator with respect to $\Q$.
In this sketch, we first show that $W(s)-\sigma^2(s)/2$ under $\Q$ has the same distribution as $W(s)-\gamma(s-t)+\sigma^2(t)/2$ under $\Pro$.
The full proof requires doing this calculation for finite-dimensional distributions
to conclude that $(W(s)-\sigma^2(s)/2)_{s\in\R^d}$ under $\Q$ has the same distribution as $(W(s)-\gamma(s-t)+\sigma^2(t)/2)_{s\in \R^d}$ under $\Pro$,
but doesn't require additional insights.
We compare generating functions: for any $\beta\in\R$, 
\begin{eqnarray*}
\lefteqn{
\log \Expec^\Q \exp\left(\beta (W(s)-\sigma^2(s)/2)\right)}\\
&=& -\frac 12\sigma^2(t) -\frac{\beta}{2} \sigma^2(s) +\frac 12 \Var \left[W(t) + \beta W(s) \right] \\
&=& -\frac{\beta}{2} \sigma^2(s)+\beta \Cov(W(t),W(s)) +\frac 12\Var \left[\beta W(s)\right] \\
&=& 
\beta \left[\frac12\sigma^2(t)-\gamma(s-t)\right]+\frac 12 \Var \left[\beta W(s)\right]\\
&=& 
\beta\Expec\left[W(s)-\gamma(s-t)+\frac 12\sigma^2(t) \right]+
\frac {\beta^2}2 \Var\left[W(s)-\gamma(s-t)+\frac 12\sigma^2(t)\right].
\end{eqnarray*}
Since $F$ is translation invariant, the $F$-value of $(W(s)-\gamma(s-t)+\sigma^2(t)/2)_{s\in \R^d}$ 
must be the same as the $F$-value of $(W(s)-W(t)-\gamma(s-t))_{s\in \R^d}$. 
The latter has the same distribution as $(Z(s-t))_{s\in\R^d}$, which yields the claim.
\end{proof}

\begin{proof}[Proof of Theorem~\ref{thm:main}]
Let $t_i\in\R^d\,, i=1,\ldots,n$ and $y_i\in\bbr\,,i=1,\ldots,n$ be arbitrary.
From Lemma~\ref{lem:suplemma} with $X_i=W_i-\sigma^2/2$ we deduce that
\[
\Pro (\eta(t_1)\le y_1,\ldots, \eta(t_n)\le y_n) = \exp\Big(-\Expec \exp\Big(
\max_{j=1,\ldots,n} (W(t_j) -\sigma^2(t_j)/2-y_j)\Big)\Big).
\]
Suppose that $\mu$ is an arbitrary probability measure on $\R^d$.
Applying Lemma~\ref{lem:changeofmeasure} with
\[
F(x) =  \frac{\max_{j=1,\ldots,n} \exp(x(t_j)-y_j) }{ \int_{\R^d} \exp(x(s)) \mu(ds) },
\]
we find that
\begin{eqnarray*}
\lefteqn{
\Expec \exp\Big( \max_{j=1,\ldots,n} (W(t_j) -\sigma^2(t_j)/2-y_j)\Big)}\\ &=& 
\int_{\R^d} \Expec\Big[ \exp(W(t)-\sigma^2(t)/2)
\frac{\exp\Big( \max_{j=1,\ldots,n} (W(t_j) -\sigma^2(t_j)/2-y_j)\Big)  }{\int_{\R^d} \exp(W(s) -\sigma^2(s)/2)\,\mu(ds)}\Big]\,\mu(dt)\\
&=&\int_{\R^d} \Expec \Big[\frac{\exp \big(\max_{j=1,\ldots,n} (Z(t_j-t) -y_j)\big) }{\int_{\R^d} \exp(Z(s-t))\,\mu(ds)}\Big]\,\mu(dt)\\
&=&\Expec \Big[\frac{\exp \big(\max_{j=1,\ldots,n} (Z(t_j-T) -y_j)\big) }{\int_{\R^d} \exp(Z(s-T))\,\mu(ds)}\Big],
\end{eqnarray*}
where {\blue the last expectation is taken \wrt\ the \ds\ of $(T,Z)$,
which is the product of the marginals.}
Applying Lemma~\ref{lem:suplemma} with 
\[
X_i(t) = Z_i(t-T_i) -\log\Big(\int_{\R^d} \exp(Z_i(s-T_i))\,\mu(ds)\Big)
\]
shows that 
\[
\Pro (\eta(t_1)\le y_1,\ldots, \eta(t_n)\le y_n)=
\Pro (\zeta(t_1)\le y_1,\ldots, \zeta(t_n)\le y_n).
\]
This yields the claim of Theorem~\ref{thm:main}.
\end{proof}

\section*{Acknowledgments}
The authors are grateful to the organizers of the workshop Stochastic Networks And Risk Analysis IV in Bedlewo, Poland, 
where much of this work was completed.
TM thanks Liang Peng for inviting him to Georgia Tech in April 2013,
when this work was initiated. 
ABD is supported in part by NSF CAREER grant CMMI-1252878,
and TM by DFF grant 4002-00435.
We thank the anonymous referees for their constructive comments and suggestions.

\appendix
\section{Computer code}
\label{app:matlab}
This Matlab code is for 1-dimensional parameter spaces, but it is almost immediately adaptable for use with random fields 
due to Matlab's capabilities to work with multidimensional arrays.
We present the Matlab code here since it can be read as pseudo-code, while
reading the R code requires some knowledge of R objects designed for spatial data.

\begin{lstlisting}
function res = generate_cluster(n,V)
    T = floor(n*rand());
    W = generateWwithdriftandcenter(T); 
    res = V + W - log(sum(exp(W)));
end


function supremum = maxstable(n)
    supremum = -Inf(n,1);
    expminusV = -log(rand())/n;
    C = generate_cluster(n,-log(expminusV));

    while ( min(max(supremum, C)) < -log(expminusV) )
        supremum = max(supremum, C);    
        expminusV = expminusV - log(rand())/n;
        C = generate_cluster(n,-log(expminusV));
    end

    supremum = max(supremum, C);
end
\end{lstlisting}

\end{document}